\newtheorem{prop}{Proposition}
\newtheorem{theorem}[prop]{Theorem}
\theoremstyle{definition}
\newcommand{\R}{\mathbb{R}}
\newcommand{\ess}[1]{\hat{#1}}
\begin{document}
\title{The Replicator Equation as an Inference Dynamic}
\author{Marc Harper}
\address{University of California Los Angeles}
\email{marcharper@ucla.edu} 
\date{\today}
\subjclass[2000]{Primary: 37N25; Secondary: 91A22, 94A15}
\keywords{evolutionary game theory, information geometry, information divergence, replicator equation, Bayesian inference, information geometry, Fisher information}

\begin{abstract}
The replicator equation is interpreted as a continuous inference equation and a formal similarity between the discrete replicator equation and Bayesian inference is described. Further connections between inference and the replicator equation are given including a discussion of information divergences, evolutionary stability, and exponential families as solutions for the replicator dynamic, using Fisher information and information geometry. 
\end{abstract}

\maketitle

\section{Introduction}

To address the question of in what sense is natural selection related to information theory and statistical inference, we draw an analogy between Bayesian inference and models of natural selection available in evolutionary game theory. Exploring this requires the use of information theory which leads into the use of information geometry and the common geometric structure of information geometry and evolutionary game theory. Recognition of this framework leads to generalizations of Bayesian inference and is explored in future work.

Bayesian inference and the discrete replicator equation share a remarkable formal similarity. The continuous replicator dynamic can be analyzed using some of the same techniques used in Bayesian inference, such as by the Kullback-Liebler divergence and with exponential families. This paper describes these similarities and explains how information geometry illuminates the connection.

As a model of natural selection, the replicator dynamic models the informatic behavior of the population distribution. Emerging purely from information-theoretical constructions, the replicator requires no biological assumptions. This is because the geometry of evolutionary game theory comes from the information geometry of manifolds of probability distributions. The theoretical results associated to this geometry, such as Fisher's fundamental theorem, are facts describing the evolution of information captured by replicating systems.

\subsection{Bayesian Inference}

\begin{quote}
Inductive inference is the only process known to us by which essentially new knowledge comes into the world.
-- R. A. Fisher, \emph{The Design of Experiments} (1935) \cite{Fisher35}
\end{quote}

Bayesian inference is a discrete dynamical system utilizing Bayes' Theorem for iterative dynamic inference. It is widely used in machine learning, e.g. in spam filtering and document classification. Define the process as follows. Consider a collection of events $H_1, H_2, \ldots , H_n$, along with Bayes' theorem
\[ P(H_i | E) = \frac{P(E|H_i) P(H_i)}{P(E)} \qquad \text{for $i = 1, 2, \ldots , n$ where} \]
\begin{enumerate}
    \item the events constitute the entire state space: $\sum_{i=1}^{n}{P(H_i)} = 1$,
    \item $P(H_i)$ is the prior probability of $H_i$,
    \item $E$ is an event corresponding to newly encountered evidence and $P(E)$ is the marginal probability of $E$, where $P(E) = \sum_{i=1}^{n}{P(E | H_i) P(H_i)}$,
    \item $P(H_i|E)$ is the posterior probability of $H_i$ given the evidence $E$.
\end{enumerate}

The process adjusts the probabilities of the events $H_1, H_2, \ldots , H_n$ in light of the evidence provided by the observation $E$, forming a dynamic process \[(P(H_1), \ldots, P(H_n)) \to (P(H_1|E), \ldots, P(H_n|E)),\] which can be iterated over a sequence of observations $E_1, E_2, \ldots$. The Kullback-Liebler information divergence $D_{KL} \left( P(H|E) || P(H) \right)$ is used to measure the gain in information from passing to the posterior distribution.

\subsection{The Discrete Replicator Dynamic}

\begin{quote}
The theory of evolution by cumulative natural selection is the only theory we know of that is in principle capable of explaining the existence of organized complexity. -- Richard Dawkins, \emph{The Blind Watchmaker} (1987) \cite{Dawkins87}
\end{quote}

Consider a population of $n$ types of replicating entities, such as genotypes (e.g. possible viral genetic sequences) or phenotypes (e.g. eye color or investment strategies). Let $x_i$ be the proportion of the population of the $i$th type and denote the population distribution $x=(x_1, \ldots, x_n)$. The discrete replicator dynamic\cite{Cressman03} is:
\[x_{i}' = \frac{x_i f_i(x)}{\bar{f}(x)}, \qquad \text{for $i = 1, 2, \ldots , n$ where} \]
\begin{enumerate}
 \item The types completely describe the population so that $\sum_{i}{x_i} = 1$, (i.e. the set of all possible states is the simplex),
 \item $f_i(x)$ is the fitness of type $i$ (dependent on the population distribution), $f=(f_1, \ldots, f_n)$ is the fitness landscape, 
 \item $\bar{f}(x) = \sum_{i=1}^{n}{x_i f_i(x)}$ is the average fitness, and
 \item $x_i'$ is the frequency of type $i$ in the next generation of the population, adjusted by the proportionality of fitness relative to the average population fitness.
\end{enumerate}

A population of a particular distribution obtains information about the environment from the fitness landscape. Replication adjusts the distribution of types in the population as dictated by the relative fitness as measured by the fitness landscape. In light of fitness landscape, some types proliferate while others decline, much like the probability of a particular event is adjusted by Bayes' theorem in light of new evidence in Bayesian inference.

\section{Formal Similarity of the Discrete Replicator Dynamic and Bayesian Inference}
The following dictionary describes the formal analogy of Bayesian inference and the discrete replicator dynamic. This analogy was independently discovered in \cite{Shalizi09}.

\begin{center}
\begin{tabular}{ll}
\textbf{Bayesian Inference} & \textbf{Discrete Replicator}\\ \hline
Prior Distribution $(P(H_1), \ldots, P(H_n))$ & Population state $x = (x_1, \ldots, x_n)$\\
New Evidence $P(E | H_i)$ & Fitness landscape $f_i(x)$\\
Normalization $P(E)$ & Mean fitness $\bar{f}(x)$\\
Posterior distribution $P(H_1 | E), \ldots, P(H_n | E)$ & Population state $x' = (x_1', \ldots, x_n')$
\end{tabular}
\end{center}

The fitness landscape provides the observation (evidence) in the inference process and the population re-aligns proportionally. Bayesian inference is a special case, formally, of the discrete replicator dynamic, since the fitness landscape in each coordinate may depend on the entire population distribution rather than only on the proportion of the $i$-type, which is significant if $n>2$. In the above formulation of Bayesian inference there is no explicit dependence of the probability $P(H_i | E)$ on any event $H_j$ with $i \neq j$.


Although the discrete replicator dynamic is intuitively satisifying, the nonlinearity of the discrete dynamic makes analysis difficult. A continuous version of the replicator equation is widely used and as a differential equation has more tractable tools for analysis.

\subsection{The Continuous Replicator Dynamic}

The replicator dynamic has an intuitive motivation as a general model of natural selection. Suppose a function $f$ from the simplex to $\R^n$ describes the fitness of the population states (or mixed-strategies), with the $i$-th component function $f_i$ giving the fitness of the $i$-th type, depending on the entire population distribution. The intuitive idea of natural selection, translated from Dawkins' Universal Darwinism, is that the relative rate of change of the proportion of the $i$-th type should be given by the difference of the fitness of the $i$-th type and the mean population fitness. In equations,
\[\frac{\dot{x}_i}{x_i} = f_i(x) - \sum_{i=1}^{n}{x_i f_i(x)} = f_i(x) - \bar{f}(x),\]
where $\bar{f}(x)$ denotes the mean of $f(x)$. After rearrangement, the replicator equation takes the form
\[ \dot{x}_i = x_i(f_i(x) - \bar{f}(x)). \]

The continuous analog can be obtained by a limiting process from the discrete dynamic\cite{Cressman03}, followed by a change in velocity that does not alter the trajectories\cite{Hofbauer98} and a possible gauge transformation. Indeed, defining a differential equation from the difference equation,
\[ \dot{x}_i = \lim_{h \to 0}{\frac{x_i(t+h) - x_i(t) }{h} } \approx {x_i}' - x_i = x_i\frac{f_i(x)}{\bar{f}(x)} - x_i  = x_i\frac{f_i(x) - \bar{f}(x)}{\bar{f}(x)},\]
which is equivalent to the following equation after a change in velocity because $\bar{f}(x)$ can be assumed to be strictly positive:
\[ \dot{x}_i = x_i\left(f_i(x) - \bar{f}(x)\right).\]

This description, in light of the relationship to inference, identifies the replicator dynamic as a continuous inference process. Since information divergence plays an important role in Bayesian inference, it is natural to study its uses for the replicator dynamic. First we must define evolutionary stability.

\section{Evolutionary Stability}
A central question in evolutionary game theory is evolutionary stability\cite{Weibull97, Nowak06}. An evolutionarily stable state (ESS) of the replicator dynamic is a population distribution that is robust to invasion by mutant types.
Formally, a distribution $\ess{x}$ on the simplex is called an \emph{evolutionarily stable state} of the replicator dynamic if $\ess{x} \cdot f(x) > x \cdot f(x)$ (in some neighborhood of $\ess{x}$). This means that $\ess{x}$ is a \emph{better reply} to all neighboring strategies, and hence robust under the action of selection to the invasion of nearby mutant strategies. Evolutionarily stable states are asymptotic rest points of the replicator dynamic and correspond to the concept of strong stability in dynamical systems\cite{Cressman03, Hofbauer98}.

\subsection{Kullback-Liebler Divergence is a Lyapunov function for the Replicator Dynamic}

The following theorem shows that the Kullback-Liebler information divergence forms a Lyapunov function for the replicator dynamic, given an evolutionarily stable state. In fact, evolutionary stability is characterized by this property. A version of this theorem was proved in \cite{Akin79} and in \cite{Akin90}. A similar result is proven in \cite{Hofbauer98}, with the Lyapunov function $V(x) = \prod_{i}{x_{i}^{ \hat{x}_i}}$.

\begin{theorem}\label{ess_Lyapunov}
The state $\ess{x}$ is an interior ESS for the replicator dynamic if and only if $D_{KL}(\ess{x} || x)$ is a local Lyapunov function.
\end{theorem}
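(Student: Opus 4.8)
The plan is to compute the time derivative of $V(x) = D_{KL}(\ess{x} \,\|\, x) = \sum_i \ess{x}_i \log(\ess{x}_i / x_i)$ along trajectories of the replicator equation and show that the sign condition $\dot V \le 0$ near $\ess{x}$ (with equality only at $\ess{x}$) is equivalent to the ESS inequality $\ess{x}\cdot f(x) > x\cdot f(x)$. Since $\ess{x}$ is interior, $V$ is well-defined, smooth, and nonnegative on a neighborhood of $\ess{x}$ in the simplex, and $V(x) = 0$ iff $x = \ess{x}$ by Gibbs' inequality; so $V$ is a candidate Lyapunov function and the whole content is the sign of $\dot V$.

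First I would differentiate. Only the terms $-\ess{x}_i \log x_i$ depend on $t$, so
\[
\dot V = -\sum_i \ess{x}_i \frac{\dot x_i}{x_i} = -\sum_i \ess{x}_i \bigl(f_i(x) - \bar f(x)\bigr) = -\bigl(\ess{x}\cdot f(x) - \bar f(x)\bigr) = -\bigl(\ess{x}\cdot f(x) - x\cdot f(x)\bigr),
\]
using $\sum_i \ess{x}_i = 1$ and $\bar f(x) = x \cdot f(x)$. This identity is the crux: $\dot V = -(\ess{x} - x)\cdot f(x)$, so $\dot V < 0$ on a punctured neighborhood of $\ess{x}$ is literally the same statement as the ESS inequality $\ess{x}\cdot f(x) > x\cdot f(x)$ holding on that neighborhood. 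The forward direction (ESS $\Rightarrow$ Lyapunov) is then immediate: the ESS condition gives $\dot V < 0$ for $x \neq \ess{x}$ near $\ess{x}$, and together with $V > 0$, $V(\ess{x}) = 0$ this is exactly the definition of a (strict) local Lyapunov function.

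For the converse, I would argue that if $V$ is a local Lyapunov function then $\dot V \le 0$ near $\ess{x}$, which via the identity above forces $\ess{x}\cdot f(x) \ge x \cdot f(x)$ in a neighborhood; to get the strict inequality required by the definition of ESS one uses that a Lyapunov function establishing stability of $\ess{x}$ must have $\dot V$ strictly negative off the rest point (or one takes this strictness as part of the definition of "local Lyapunov function" being used here), again transferring directly through $\dot V = -(\ess{x}-x)\cdot f(x)$. The main obstacle is not the computation — which is short — but pinning down the precise regularity/strictness conventions: one must make sure the neighborhood on which the Lyapunov inequality holds is the same neighborhood appearing in the ESS definition, and handle the boundary of the simplex correctly (the claim is about an *interior* ESS, so restricting attention to a small ball inside the open simplex is legitimate and keeps $V$ finite and smooth). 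I would also remark that this explains the relation to the alternative Lyapunov function $V(x) = \prod_i x_i^{\ess{x}_i}$ from \cite{Hofbauer98}, since $\log$ of that product is $-D_{KL}(\ess{x}\,\|\,x)$ up to the constant $\sum_i \ess{x}_i \log \ess{x}_i$, so the two functions have derivatives of opposite sign and carry the same information.
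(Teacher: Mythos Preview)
Your proposal is correct and follows essentially the same route as the paper: compute $\dot V = -\sum_i \ess{x}_i\,\dot x_i/x_i$, substitute the replicator equation, and reduce to $\dot V = -(\ess{x}\cdot f(x) - x\cdot f(x))$, so that the sign of $\dot V$ is exactly the ESS inequality, with nonnegativity of $V$ (Gibbs/Jensen) providing the remaining Lyapunov condition. Your treatment is slightly more careful about the converse and the strictness/neighborhood conventions, and your remark linking $D_{KL}$ to the product $\prod_i x_i^{\ess{x}_i}$ via the logarithm makes explicit the relationship the paper only alludes to.
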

\begin{proof}
Let $V(x) = D_{KL}(\ess{x} || x) = \sum_{i}{\ess{x}_i \log{\ess{x}_i}} - \sum_{i}{\ess{x}_i \log{x_i}}$.
Then we have that
\begin{align*} \dot{V}(x) &= -\sum_{i}{ \ess{x}_i \frac{\dot{x}_i}{x_i}} = -\sum_{i}{ \ess{x}_i (f_i(x) - \bar{f}(x)) }\\
&= -\sum_{i}{ \ess{x}_i f_i(x)} + \sum_{i}{ \ess{x}_i\bar{f}(x)} = -\sum_{i}{ \ess{x}_i f_i(x)} + \left(\sum_{i}{ \ess{x}_i}\right)\bar{f}(x)\\
&= -\sum_{i}{ \ess{x}_i f_i(x)} + \bar{f}(x) = -(\ess{x} \cdot f(x) - x \cdot f(x)) < 0.
\end{align*}
The last inequality holds if and only if $\ess{x}$ is an ESS. Finally, by Jensen's inequality, $D_{KL}$ is minimized when $x = \ess{x}$, so it is a local Lyapunov function.
\end{proof}

Viewing the state $\hat{x}$ as the final or equilibrium distribution of the dynamic (analogous to the ``true distribution'' in an inference context), we can interpret the quantity $D_{KL}(\ess{x} || x)$ as the \emph{potential information} of the dynamic system. As the system converges, the potential information is decreasing and is minimized because it is a Lyapunov function.

This result is the continuous analog to the use of the Kullback-Liebler divergence as a measure of information gain of Bayesian inference. Within the neighborhood of the ESS, the system is minimizing the information divergence between the current population distribution and that of the selectively stable configuration, that is it is minimizing the potential information in the system. The theorem shows that this is an informatic characterization of evolutionary stability.

\subsection{Potential Information and the Discrete Replicator Dynamic}

The potential information $D_{KL}(\ess{x} || x)$ plays an analogous role for the discrete replicator dynamic.

\begin{theorem}\label{ess_Lyapunov}
Suppose that the fitness landscape is strictly positive, that is $f_i(x) > 0$ for all $i$ and $x$.
If the population distribution unfolds according to the discrete replicator dynamic then $\ess{x}$ is an interior ESS if and only if the potential information is decreasing along iterations of the dynamic.
\end{theorem}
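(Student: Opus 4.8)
The plan is to mimic the one-line argument behind the continuous case and track how $V(x)=D_{KL}(\ess{x}\|x)$ changes under a single step of the map $x\mapsto x'$. Using $x_i'=x_i f_i(x)/\bar f(x)$ and $\sum_i\ess{x}_i=1$, a direct substitution gives
\[ V(x')-V(x)=\sum_i \ess{x}_i\log\frac{x_i}{x_i'}=\sum_i \ess{x}_i\log\frac{\bar f(x)}{f_i(x)}=\log\bar f(x)-\sum_i \ess{x}_i\log f_i(x), \]
which is finite because $f_i>0$. So the theorem reduces to signing this quantity for $x$ near $\ess{x}$ and comparing the answer with the ESS inequality $\ess{x}\cdot f(x)>x\cdot f(x)=\bar f(x)$.

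For the implication ``potential information decreasing $\Rightarrow$ ESS'' I would invoke Jensen's inequality for the convex function $-\log$, applied with weights $\ess{x}_i$ to the values $f_i(x)/\bar f(x)$, which yields
\[ V(x')-V(x)\;\ge\;\log\frac{\bar f(x)}{\ess{x}\cdot f(x)}. \]
Hence if $V(x')-V(x)<0$ for every nearby $x\neq\ess{x}$, then $\bar f(x)<\ess{x}\cdot f(x)$ there, i.e. $x\cdot f(x)<\ess{x}\cdot f(x)$ — exactly the defining property of an interior ESS. This direction is clean.

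For the converse, ``ESS $\Rightarrow$ potential information decreasing'', I would use that an interior ESS is an interior rest point, so $f_i(\ess{x})=\bar f(\ess{x})=:\phi>0$ for all $i$, which forces the $f_i$ to be mutually close near $\ess{x}$. Writing $\delta_i=f_i(x)-\phi$ and Taylor-expanding $\log$ to second order about $\phi$ gives
\[ V(x')-V(x)=\frac1\phi\Bigl((\bar f(x)-\phi)-\sum_i\ess{x}_i\delta_i\Bigr)+\frac1{2\phi^2}\Bigl(\sum_i\ess{x}_i\delta_i^2-(\bar f(x)-\phi)^2\Bigr)+O(\|x-\ess{x}\|^3). \]
The first bracket equals $\sum_i(x_i-\ess{x}_i)f_i(x)=-(\ess{x}\cdot f(x)-x\cdot f(x))$, which the ESS condition makes strictly negative; since $\ess{x}$ is a rest point its linear part vanishes, so it has size $\|x-\ess{x}\|^2$ with leading term $-y^\top Df(\ess{x})\,y$, $y=x-\ess{x}$, and $Df(\ess{x})$ negative definite on the tangent space.

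The obstacle — which I expect to be the real difficulty — is that the second bracket equals $\mathrm{Var}_{\ess{x}}(f(x))\ge 0$ to leading order, which is also of size $\|x-\ess{x}\|^2$ and so competes with, rather than being dominated by, the favorable first term. Closing the argument requires the ESS margin $\ess{x}\cdot f(x)-x\cdot f(x)$ to exceed $\tfrac{1}{2\phi}\mathrm{Var}_{\ess{x}}(f(x))$ near $\ess{x}$ — equivalently, in terms of the linearization, that the step $x\mapsto x'$ does not overshoot $\ess{x}$. This holds when the landscape is flat enough near $\ess{x}$ but can fail for a steep one, even a bounded everywhere-positive one, where the discrete iterate overshoots and $V$ increases, in parallel with the familiar fact that discrete-time replicator iterates near an interior ESS are stable only under an eigenvalue/step-size restriction absent in the continuous dynamic. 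I would therefore either add such a quantitative hypothesis to the statement or phrase the conclusion at leading order; modulo this, the second-order estimate above together with the Jensen bound of the previous paragraph gives the claimed equivalence.
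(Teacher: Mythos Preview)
Your computation of the one-step increment and your use of Jensen's inequality are both correct, and in fact more careful than the paper's own argument. The paper arrives at the same expression $P=-\sum_i\hat x_i\log\bigl(f_i(x)/\bar f(x)\bigr)$ and then writes
\[
P \;\leq\; -\log\!\left(\sum_i \hat x_i\,\frac{f_i(x)}{\bar f(x)}\right)=-\log\frac{\hat x\cdot f(x)}{x\cdot f(x)}<0,
\]
invoking Jensen and the ESS condition. But concavity of $\log$ gives the \emph{reverse} inequality, namely $P\geq -\log\bigl(\hat x\cdot f(x)/\bar f(x)\bigr)$, which is exactly your bound $V(x')-V(x)\geq\log\bigl(\bar f(x)/\hat x\cdot f(x)\bigr)$. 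With the inequality oriented correctly the chain proves only the implication ``potential information decreasing $\Rightarrow$ ESS'', precisely the direction you handle cleanly; the paper's purported proof of ``ESS $\Rightarrow$ decreasing'' rests on Jensen applied the wrong way.

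Your hesitation about the converse is therefore not a gap in your argument but a genuine defect in the stated theorem. The second-order competition you isolate between the ESS margin $(\hat x-x)\cdot f(x)$ and $\tfrac{1}{2\phi}\,\mathrm{Var}_{\hat x}\!\bigl(f(x)\bigr)$ is real and is not cured by the strict-positivity hypothesis. A concrete two-type instance: with $\hat x=(\tfrac12,\tfrac12)$ and $f_1(x)=\phi\,e^{-c(x_1-1/2)}$, $f_2(x)=\phi\,e^{\,c(x_1-1/2)}$ (strictly positive for every $c>0$), one has $\sqrt{f_1f_2}\equiv\phi$ and $\hat x\cdot f(x)-x\cdot f(x)=2\phi\,t\sinh(ct)>0$ for $t:=x_1-\tfrac12\neq0$, so $\hat x$ is an interior ESS; yet
\[
V(x')-V(x)=\log\bar f(x)-\log\phi=\log\!\bigl(\cosh(ct)-2t\sinh(ct)\bigr)=\tfrac{c}{2}(c-4)\,t^2+O(t^3),
\]
which is \emph{positive} for all small $t\neq0$ once $c>4$. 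This is exactly the overshooting phenomenon you anticipated, and your proposed remedy --- a quantitative step-size/curvature hypothesis, or a leading-order formulation --- is the honest way to salvage that direction.
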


\begin{proof}
First note that the ESS condition can be equivalently stated as
\[\frac{\ess{x} \cdot f(x)}{x \cdot f(x)} > 1\]
for all $x$ in a neighborhood of $\ess{x}$ (using the assumption that the fitness landscape is strictly positive).

Consider the difference in potential information of two successive states $P = D_{KL}(\ess{x} || x') - D_{KL}(\ess{x} || x)$. Assume that x is in the ESS neighborhood of $\ess{x}$. Then,
\begin{align*}
P &= \sum_{i}{\ess{x}_i \log{\ess{x}_i}} - \sum_{i}{\ess{x}_i \log{x_i'}} - \left( \sum_{i}{\ess{x}_i \log{\ess{x}_i}} - \sum_{i}{\ess{x}_i \log{x_i}} \right)\\
&= \sum_{i}{\ess{x}_i \log{x_i}} - \sum_{i}{\ess{x}_i \log{x_i'}}\\
&= \sum_{i}{\ess{x}_i \log{x_i}} - \sum_{i}{\ess{x}_i \log{\left(x_i \frac{f_i(x)}{\bar{f}(x)}\right)}}\\
&= - \sum_{i}{\ess{x}_i \log{\left(\frac{f_i(x)}{\bar{f}(x)}\right)}}\\
&\leq - \log{\left(\sum_{i}{\ess{x}_i \frac{f_i(x)}{\bar{f}(x)}}\right)} = -\log{\left( \frac{\ess{x} \cdot f(x)}{x \cdot f(x)} \right)} < 0, \\
\end{align*}
where the log is moved outside the sum using Jensen's inequality and the logarithm in the last line is positive by the ESS condition.
\end{proof}

The proof shows that the state $\ess{x}$ is an ESS if and only if the potential information is decreasing along iterations of the dynamic, once again giving a characterization of evolutionary stability.

\subsection{Exponential Families -- Solutions of the Continuous Replicator Equation}

In Bayesian Inference, an exponential family produces a conjugate prior which is also an exponential family, possibly of the same type. The analogous property for the continuous inference equation, the replicator dynamic, is to be of the form of an exponential family at each point on the trajectory. Exponential families are maximal entropy distributions \cite{Naudts08}, a property that corresponds with the intuitive explanation of the action of natural selection from the introduction.
Define an \emph{exponential family} to be a collection of distributions of the form
\[ p(x; \theta) = \text{exp}\left(C(x) + \sum_{i}{ \theta_i F_i(x) - \psi(\theta)} \right),\]
for functions $F_i$, $C$, and $\psi$ and parameter vector $\theta$. These are maximal entropy distributions with respect to constraints of the form $E\left[F_i(x)\right] = \lambda_i$ and can be derived with Lagrange multipliers.

The solutions of the replicator equation can be realized as exponential families \cite{Karev09, Nihat05, Akin82}. Let $x_i = \exp (v_i - G)$ with $\dot{v_i} = f_i(x)$ and $G(x)$ a normalization constant to ensure that the distribution sums to one. From the fact that $\sum_{i}{x_i} = 1$, $0 = \sum_{i}{\dot{x_i}}$ and so
\begin{align*}
0 = \sum_{i}{\dot{x}_i} &= \sum_{i}{\exp (v_i(x) - G(x)) (\dot{v}_i(x) - \dot{G}(x))}\\
&= \sum_{i}{x_i (\dot{v}_i(x) - \dot{G}(x))} = \sum_{i}{(x_i f_i(x))} - \dot{G}(x)\\
&= \bar{f}(x) - \dot{G}(x)
\end{align*}
Hence $\dot{G} = \bar{f}(x)$. Now $x_i$ satisfies
\[ \dot{x_i} = \exp (v_i(x) - G(x)) (\dot{v_i}(x) - \dot{G}(x)) = x_i (f_i(x) - \bar{f}(x)), \]
which is the replicator equation. In the case of a log-linear fitness landscape, explicit solutions can be derived \cite{Nihat05}. In this case, the equation for the variable $v$ can be reduced to a linear differential equation, which can be solved with eigenvalue methods.

An intuitive discussion of the above result is worthwhile. Entropy is a measure of the extent to which the distribution has ``spread out'' over the landscape. Natural selection acts to fit the available niches in a fitness landscape, arriving at the maximal entropy distribution allowed by the constraints of the landscape. In the absence of variation within the fitness landscape, the replicator dynamic is stable. In fact, if $f_i(x) = c$ for all $i$ and all $x$ then any $x$ is stationary (and a Nash equilibrium), and the solution with maximal entropy distribution is the uniform distribution, which follows directly from the definition of the exponential family. In the case of a variable fitness landscape, natural selection re-aligns the population distribution to fill out the landscape if not at equilibrium, at each point taking the maximal entropy distribution available within the constraints of the values of the $f_i(x)$.

\section{Understanding the Connection}
The connections between inference and evolutionary game theory are not just formal coincidence. Information geometry explains the commonality.

\subsection{Information Geometry}

The set of categorical distributions on $n$ variables forms a Riemannian manifold\cite{Amari93} via the Fisher information metric
\[ g_{ij}(x) = \mathbb{E}\left[ \frac{\partial \log p}{\partial x^i} \frac{\partial \log p}{\partial x^j}  \right]. \]

The exponential map of this manifold gives the exponential families of the previous section. In information geometry, the replicator equation is known as the \emph{natural gradient}. The gradient flow of the Fisher information metric is the replicator equation. The Fisher information metric can be obtained from the Hessian of the Kullback-Liebler information divergence, localizing the asymmetric information divergence to the symmetric Fisher information. The metric is known in evolutionary game theory as the Shahshahani metric and the manifold is identified with its embedding into the reals as the $(n-1)$-dimensional simplex. For explicit details see \cite{Harper09_ig_egt}.

In this context, the intreptation of the replicator equation as a continuous inference equation is natural, and the properties of the dynamic with respect to the information divergence and formal solutions as exponential families is less surprising. The replicator equation can now be understood as modeling the informational dynamics of the population distribution, moving in the direction of maximal local increase of potential with respect to the Fisher information, and ultimately converging to a minimal potential information state if a stablizing state (ESS) exists in the interior of the state space.

The conceptual situation is similar to that of the Price equation, a statistical relationship that models evolutionary processes but itself relies on no biological assumptions. Indeed, the Price equation is equivalent to the replicator equation\cite{Page02}. Similarly, the Shahshahani geometry of evolutionary game theory has a purely mathematical origin from information theory. This means that Kimura's maximum principle and Fisher's fundamental theorem are statements of mathematical and statistical facts that happen to model evolutionary processes rather than facts about natural selection itself.

\section{Discussion}

The replicator dynamic is a continuous inference dynamic. It is a process guided by the geometry of Fisher information. As a model of natural selection, the replicator equation captures the informatic change associated to the population distribution. The formal analogy of Bayesian inference and the discrete replicator dynamic leads to the interpretation and use of information theoretic quantities in evolutionary game theory.

In particular, the Kullback-Liebler information divergence can be used to define a potential information for replicator dynamics, both discrete and continuous, given an evolutionarily stable state. this property is minimized by the action of the dynamic and characterizes evolutionary stability. The concept of exponential families gives formal solutions to the continuous replicator dynamic.

\bibliography{ref}
\bibliographystyle{plain}

\end{document}